\documentclass[11pt]{amsart}
\usepackage[T1]{fontenc}
\usepackage[english]{babel}
\usepackage{libertine}

\usepackage{amsmath,amssymb,amsthm, enumitem,xspace}
\usepackage{comment}
\usepackage{accents}
\usepackage[colorlinks=true,citecolor=blue,linkcolor=blue]{hyperref}
\usepackage{cleveref}
\usepackage{tikz}
\usetikzlibrary{arrows.meta} 
\usepackage{mathrsfs}

\newtheorem{thm}{Theorem} 
\newtheorem*{thm*}{Theorem}

\newtheorem{prop}[thm]{Proposition}

\newtheorem{ithm}{Theorem}

\newtheorem*{iprob*}{Problem}

\theoremstyle{definition}

\newtheorem{rem}[thm]{Remark}

\newcommand*{\dd}{\mathrm{d}}

\newcommand*{\ZZ}{\mathbf{Z}}

\newcommand*{\RR}{\mathbf{R}}
\newcommand*{\NN}{\mathbf{N}}

\newcommand*{\CC}{\mathbf{C}}
\newcommand*{\DD}{\mathbf{D}}

\newcommand*{\SL}{\mathbf{SL}}

\newcommand*{\Aut}{{\rm Aut}}

\newcommand*{\se}{\subseteq}
\newcommand*{\inv}{^{-1}}

\newcommand*{\fhi}{\varphi}

\newcommand*{\smooth}[1]{{#1}_{\mathrm{sm}}}

\newcommand*{\one}{\boldsymbol{1}}

\DeclareMathOperator{\Fix}{Fix}
\DeclareMathOperator{\diam}{diam}
\title[Inadmissible representations of the tree automorphism group]{Inadmissible representations\\ of the tree automorphism group}

\author[Nicolas Monod]{Nicolas Monod}
\address{
EPFL, Switzerland}
\begin{document}

\begin{abstract}
The automorphism group of a regular locally finite tree is shown to admit irreducible Banach representations that are not admissible. The dense subspace of smooth vectors contains no algebraically irreducible component.
\end{abstract}
\maketitle


\section{Introduction}
\subsection{Context}
Classically, a linear representation of a group $G$ on a Banach space $V$ is called irreducible if there is no $G$-invariant closed proper subspace ${V_0\se V}$; proper means $V_0\neq0,V$.

In this note, we call this \textbf{topologically irreducible} to distinguish it from the following. The representation is \textbf{algebraically irreducible} if there is no $G$-invariant proper subspace ${V_0\se V}$ at all, with $V$ viewed merely as a vector space.

A straightforward Baire argument shows that algebraic irreducibility can never happen when $V$ is an infinite-dimensional Banach space and $G$ is countable, or more generally totally disconnected locally compact second countable and the representation is continuous.

That setting includes notably $p$-adic algebraic groups.

Nevertheless, algebraic irreducibility plays a crucial role in the very rich representation theory of such groups because one can replace the Banach space $V$ with the subspace of \textbf{smooth} vectors $\smooth{V} \se V$, namely those with an open stabilizer (which in this setting coincides with the ``$K$-finite'' vectors). Whenever algebraically irreducible smooth representations can be classified, this allows \emph{some} classification results, most notably when $V$ is a Hilbert space and the representation is unitary, see e.g.~\cite[\S 4.21]{Bernstein-Zelevinskii76} and~\cite[p.~30]{Jacquet-Langlands}.


\medskip
In this context, the key concept is \textbf{admissibility}, namely the requirement that open subgroups $U<G$ have only a finite-dimensional space $V^U$ of fixed vectors. Admissibility is the gateway to classification, when classification is possible. A cornerstone of the study of representations of reductive $p$-adic groups is to prove admissibility of irreducible representations, see e.g.~\cite[VI.2.2]{Renard_book}.

\subsection{Tree groups}
Early on, it was recognized that the automorphism group $G=\Aut(T)$ of a regular tree $T$ of finite valency~$\geq 3$ offers a very interesting non-linear analogue to $p$-adic groups. One of the reasons is that $T$ occurs as the Bruhat--Tits building~\cite{Bruhat-Tits1967,Bruhat-Tits1972} of rank one $p$-adic groups when the valency is $p+1$; and yet $\Aut(T)$ is much larger than an algebraic group. The representation theory of $\Aut(T)$ was initiated by Cartier~\cite{Cartier_bourbaki,Cartier73} and the unitary case studied in great detail in~\cite{Figa-Talamanca-Nebbia}. Indeed the unitary dual is completely known; this follows from a complete description by Olshanskii of all algebraically irreducible smooth representations~\cite{Olshanskii77_en}. Again, this depends crucially on the fact that they are all admissible, which was proved earlier by Olshanskii~\cite{Olshanskii75}.

The situation is very different for topological irreducibility:

\begin{ithm}\label{ithm:inad}
Let $G$ be the automorphism group of a regular tree of finite valency~${\geq 3}$.

There exists topologically irreducible continuous $G$-representations on a Banach space $V$ that are not admissible.

Moreover, the subspace $\smooth V$ of smooth vector does not contain any non-zero algebraically irreducible subrepresentation.
\end{ithm}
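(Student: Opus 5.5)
We will construct $V$ as a space of functions on the boundary $\partial T$, or on ends, on which $G$ acts by a twisted quasi-regular action. Fix an end $\xi\in\partial T$ and let $P=G_\xi$ be its stabilizer; $G/P\cong\partial T$ is compact. The key choice is a Banach space of functions on $\partial T$ that is \emph{not} $L^2$ or $C$, where irreducible pieces are admissible by Olshanskii. Instead we take a non-reflexive space such as $L^1(\partial T,\mu)$, or better $L^\infty$ or the space of bounded (not necessarily continuous) measures. We twist by a suitable character of $P$ built from the Busemann cocycle $\beta$, namely $(g\cdot f)(\eta)=e^{s\beta_\eta(g^{-1}o,o)}f(g^{-1}\eta)$, with $s$ tuned so that the representation is isometric, or at least uniformly bounded. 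Continuity of the representation is needed for the smooth vectors to matter. So I would rather take $V$ to be the closure in $L^p$, or in some weighted $\ell^1$-type norm, of the locally constant functions, chosen so that $\smooth V$ is exactly the locally constant functions while still having a non-standard topology.

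Non-admissibility then comes from the size of $V^U$ for $U=$ the stabilizer of a vertex $o$. Our plan is to replace $\partial T$ by a larger homogeneous space $G/H$ with $H$ non-compact but not cocompact, e.g.\ $H$ the pointwise stabilizer of a bi-infinite geodesic or of a half-line. Then $U$ has infinitely many orbits on $G/H$ (indexed by distance to $o$), and the indicator of each orbit is $U$-fixed. This gives $\dim V^U=\infty$ for $V$ a suitable completion of $C_c(G/H)$, e.g.\ weighted $\ell^1(G/H)$ or $c_0(G/H)$ for discrete $G/H$ when $H$ is open. Concretely I would take $H=$ the pointwise stabilizer of a geodesic ray: $H$ is open, $G/H$ is the countable set of rays, and $V=\ell^p(G/H)$ is continuous with all vectors of finite support smooth.

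The main obstacle is \textbf{topological irreducibility}, which fails for plain $\ell^p(G/H)$: for instance $G$ maps rays to their ends, giving invariant subspaces. So the weights or the completion must be engineered so that any nonzero closed invariant subspace contains a delta function. My approach would be to show that for any $v\neq0$, suitable averages $\int_U u\cdot v\,\dd u$ composed with $G$-translates converge in norm to a multiple of $\delta_{x}$. This uses that $\ell^p$-norms of spreading averages over the many rays sharing a long initial segment decay, and the non-reflexivity ensures the limit is in norm, not merely weak. Once $\delta_x$ is in the subspace, transitivity gives density.

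For the second claim, take $W\se\smooth V$ nonzero algebraically irreducible. By Olshanskii's results quoted above, $W$ would be admissible, so $W^U$ is finite-dimensional for small $U$. Every smooth vector of our $V$, being finitely supported, generates under $G$ a space whose $U$-invariants contain infinitely many independent orbit-sums. That contradicts admissibility and shows no such $W$ exists.
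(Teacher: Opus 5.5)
\textbf{Gap: your construction has no working mechanism for topological irreducibility, and its setup is already broken.}

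The pointwise stabilizer $H$ of a ray $(x_0,x_1,\ldots)$ is \emph{not} open. For any finite $F\se T$, some element of $\Fix_G(F)$ fixes $x_0,\ldots,x_n$ for large $n$ but moves $x_{n+1}$ to another neighbour of $x_n$. Consequently:
\begin{itemize}
\item $G/H$ is uncountable;
\item $g\mapsto\pi(g)\delta_x$ is not continuous on $\ell^p(G/H)$, and $\delta_x$ is not smooth;
\item the $U$-orbits are uncountable, so their indicators are not in $\ell^p$.
\end{itemize}

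Repairing this with an open $H$ does not help. Take $H=\Fix_G(x_0)=K$, so $G/H=T$. Then $\ell^p(T)$ is reducible for every $p$, because the $G$-equivariant neighbour-sum operator $A$ has spherical eigenfunctions $\omega_\lambda$:
\begin{itemize}
\item for $p>2$, $\omega_\lambda\in\ell^p$, so $\ker(A-\lambda)$ is a proper closed invariant subspace;
\item for $p<2$, $\omega_\lambda$ lies in the dual $\ell^{p'}$, so $\overline{(A-\lambda)\ell^p}$ is a proper closed invariant subspace (for $p=1$, $\lambda=q+1$ gives the zero-sum functions);
\item for $p=2$, spectral projections of the self-adjoint $A$ split the representation.
\end{itemize}
So no ``spreading average'' can produce $\delta_x$ from every non-zero vector. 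Your argument for the second claim also fails here: smooth vectors are not only the finitely supported ones, since every radial $\ell^p$ function is $K$-fixed.

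\textbf{There is also a structural obstruction that no choice of weights or completion avoids.} Let $e_K=\int_K\pi(k)\,\dd k$ and $R=e_K\pi(h)|_{V^K}$ with $d(x_0,hx_0)=1$.
\begin{itemize}
\item The Hecke algebra $\mathcal{H}(G,K)$ is the polynomial algebra in $\one_{KhK}$, which acts on $V^K$ as a multiple of $R$. So every $e_K\pi(g)$ acts on $V^K$ as a polynomial in $R$.
\item Hence for any closed $R$-invariant $W_0\se V^K$, the closed $G$-span $V_0$ of $W_0$ satisfies $V_0^K=e_KV_0=W_0$.
\item So if $V$ is topologically irreducible, $R$ has no closed invariant subspaces of $V^K$ besides $0$ and $V^K$.
\end{itemize}
Your goal $\dim V^K=\infty$ therefore \emph{forces} a counterexample to the invariant subspace problem on $V^K$. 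For scalar completions of the kind you describe, $R$ is an explicit tridiagonal-type operator on a weighted sequence space of $K$-orbits. In the cases above it visibly has invariant subspaces, and in general you would in effect have to construct a new explicit counterexample.

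\textbf{This is why the paper takes the counterexample as input.}
\begin{itemize}
\item It starts from an Enflo--Read operator $\alpha$ on $W$ with $\|\alpha\|<2q^{1/2}$ and sets $\tau=\fhi(\alpha)$, so that $\tau+q\tau\inv=\alpha$.
\item It induces to $V=L^p(\partial T,W)$ via $\tau^{B_\xi(x_0,gx_0)}$. Then $V^K\cong W$, and the Hecke operator corresponds to $\frac1{q+1}\alpha$.
\item A minimality argument over finite complete subtrees, using that $\pm q$ is not an eigenvalue of $\tau$, shows every non-zero closed invariant subspace meets $V^K$.
\item Half-tree indicators then give density, because $\tau-\tau\inv$ is invertible.
\item For the last claim, the same fixed-vector argument works in non-closed subspaces. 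Combined with Olshanskii's admissibility, it would yield a finite-dimensional $\alpha$-invariant subspace of $W$, which is absurd.
\end{itemize}
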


In particular, the classification of algebraically irreducible smooth representations gives no information at all about $V$. By contrast, all topologically irreducible unitary $G$-representations are admissible, see e.g.~\cite{Figa-Talamanca-Nebbia}. Very rencently, the corresponding statement has been established by Viola~\cite{Viola_tree_arx} in the more general setting of Hilbert space representations preserving a sesquilinear form of any finite index.


\medskip
Two remarks are in order.

The first is that \Cref{ithm:inad} is a non-Archimedean analogue of Soergel's result~\cite{Soergel} that $\SL(2,\RR)$ admits inadmissible topologically irreducible continuous representations. Although the techniques are different because $G=\Aut(T)$ does not give rise to Harish-Chandra modules, the strategy is parallel in that we ``induce'' from counter-examples to the invariant subspace problem.

The second remark is, therefore, that Soergel's own two remarks apply: the hard work is done by Enflo and Read to provide these counter-examples, and the idea to induce them is attributed to Vogan.

\medskip
Given the connection to the invariant subspace problem, it is of keen interest to investigate which Banach spaces $V$ can occur in \Cref{ithm:inad}. Leveraging the work of Read, we obtain:

\begin{ithm}\label{ithm:more-V}
In \Cref{ithm:inad}, one can arrange that $V$ is isometrically isomorphic to its own bidual. Alternatively, one can choose $V=L^1$.
\end{ithm}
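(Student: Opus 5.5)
The plan is to run the same induction construction that proves \Cref{ithm:inad}, and to notice that the only input it needs from operator theory is a bounded operator without non-trivial closed invariant subspaces on a Banach space $E$; the output space $V$ will be built from $E$ in a way that keeps the relevant Banach-space properties. Concretely, I would proceed as follows. Fix a vertex stabilizer or, better, the stabilizer $H$ of an end (or of an edge together with a hyperbolic element $t$ translating along an axis), so that $H$ contains a cocompact subgroup and $G/H$ carries a natural structure in which the translation $t$ acts. Given an operator $A$ on $E$ with no invariant closed subspace, I would let $\ZZ$ (generated by $t$) act on $E$ through an invertible operator built from $A$. Read's examples give operators that are not invertible, so I would first replace $A$ by something like $\lambda + A$ with $\lambda$ outside the spectrum, or use $\exp(A)$. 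Any such function has the same invariant subspaces as $A$ when $A$ is recovered from it by holomorphic functional calculus. I would then induce to $G$ from the group $\ZZ\ltimes K$, with $K$ acting trivially. The induced space is $V=L^p(G/H', E)$ or $\ell^p$-type, a space of functions on the compact-open quotient valued in $E$.

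Since the statement asks specifically for $V$ isometric to its bidual, or $V=L^1$, I would choose the induction space carefully. For the bidual case, Read constructed operators without invariant subspaces on a quasi-reflexive space $E$ of order one, or on a space $E$ isometric to $E^{**}$ (e.g.\ James-type spaces). I would then take $V=\ell^\infty$-free completions, i.e.\ $V=c_0$-type or $\ell^p$-sums ($1<p<\infty$) of copies of $E$ indexed by $G/U$ for an open compact $U$. An $\ell^p$-sum with $p\in(1,\infty)$ of a space isometric to its bidual is again isometric to its bidual via the canonical identification $(\bigoplus_p E)^{**}=\bigoplus_p E^{**}$. For $L^1$, Read also produced operators on $\ell^1$ without invariant subspaces. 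An $\ell^1$-sum of countably many copies of $\ell^1$ is $\ell^1$, and this is the natural reading of ``$L^1$'' here for a countable discrete index set. If the induction in the main proof naturally uses a measure on a compact boundary, then I would use $L^1(\partial T, \ell^1)\cong L^1$ instead.

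The steps, in order, are these. First, recall the induction construction from the proof of \Cref{ithm:inad} and isolate the property of $E$ it uses, namely the existence of an operator, or invertible operator, without invariant subspaces, plus whatever duality is needed to show topological irreducibility and non-admissibility. Second, check that this construction is compatible with $\ell^p$-sums or $L^1$-Bochner spaces. Third, quote Read's operators on the relevant spaces. Fourth, verify the isometric identifications of $V$.

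The main obstacle is the first step, in case the irreducibility argument uses reflexivity or separability of the dual. On $\ell^1$ duality arguments fail, since the dual is $\ell^\infty$, so I would redo the irreducibility proof directly. The argument would take a closed invariant subspace $V_0$, average over compact open subgroups to project onto the $U$-invariants, and obtain a closed $A$-invariant subspace of $E$. Such a subspace is trivial, hence $V_0$ is trivial. I would also need to ensure the invertibility trick preserves the absence of invariant subspaces on the given space.
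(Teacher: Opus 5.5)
There is a genuine gap, and it lies exactly where the content of this statement is. In \Cref{ithm:inad} the representation lives on the Bochner space $V=L^p(\partial T,W)$ over the non-atomic boundary measure $\mu=\mu_{x_0}$. This is the ``induction'' from an end stabilizer that you first had in mind. \Cref{thm:irred} already gives topological irreducibility and inadmissibility for \emph{every} Banach space $W$ carrying an operator without invariant closed proper subspace, and for every $1\le p<\infty$. So the obstacle you single out (reflexivity or duality hidden in the irreducibility proof) does not arise, and nothing needs to be redone on the representation side.

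Conversely, you give no reason why a discrete space $\ell^p(G/U,E)$ with $U$ compact open would carry a topologically irreducible representation encoding your operator. The identity $(\bigoplus_p E)^{**}=\bigoplus_p E^{**}$ is true, but it concerns the wrong space. For the actual $V$, the bidual is not computed coordinatewise. Over a non-atomic measure, the canonical embedding of $L^{p'}(\mu,W^*)$ into $L^p(\mu,W)^*$ is onto if and only if $W^*$ has the Radon--Nikod{\'y}m property. Otherwise $V^*$ is strictly larger and you have no description of $V^{**}$ at all. Your proposal never addresses this.

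The missing step is as follows. Take $W$ to be James's space with Read's operator, and $1<p<\infty$. Since $W^{**}\cong W$ is separable, $W^*$ is separable too, because a space with separable dual is separable. Hence $W^*$ and $W^{**}$ are both separable dual spaces (of $W$ and of $W^*$), so both have the Radon--Nikod{\'y}m property. Applying the duality theorem twice gives, isometrically,
\[
V^*=L^{p'}(\partial T,W^*), \qquad V^{**}=L^{p}(\partial T,W^{**}).
\]
The latter is isometric to $L^p(\partial T,W)=V$ by applying $W^{**}\cong W$ pointwise.

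For the second half, your fallback is the right choice: $L^1(\partial T,\ell^1)$ with Read's operator on $\ell^1$ and $p=1$. By Fubini it is $L^1(\partial T\times\NN)$, and $\partial T\times\NN$ is a separable, $\sigma$-finite, non-atomic measure space, so this is the Lebesgue space $L^1$. Your other reading is wrong: $\ell^1$ is not isomorphic to $L^1$. For instance, $\ell^1$ is a separable dual and so has the Radon--Nikod{\'y}m property, whereas $L^1[0,1]$ does not. An $\ell^1$-sum of copies of $\ell^1$ would therefore not yield the claim, even if it carried a suitable representation.
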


The usual convention is that $L^1$ denotes a classical Lebesgue space, for instance $L^1([0,1])$ or $L^1(\RR)$, which are isomorphic as Banach spaces.

\section{Proofs}
\subsection{The tree}
We fix throughout an integer $q\geq 2$ and consider the $(q+1)$-regular tree $T$ and its automorphism group $G=\Aut(T)$ endowed with the locally compact topology of pointwise convergence. We identify $T$ with its vertex set endowed with the graph distance $d$.

A \textbf{geodesic ray} in $T$ is a sequence $(x_k)_{k\in\NN}$ of vertices such that $d(x_k, x_n) = |k-n|$ for all $k,n\in \NN$; we say that this ray is \textbf{issuing} from $x_0$. Geodesic lines are defined similarly but with $k\in \ZZ$. The \textbf{ideal boundary} $\partial T$ of $T$ refers to the set of equivalence classes of all geodesic rays, where $(x_k)$ and $(y_k)$ are deemed equivalent if there is $m\in \ZZ$ such that $x_k = y_{k+m}$ for all $k$ large enough.

An equivalent definition of $\partial T$ is as follows: fix some $x_0$ and let $\partial T$ be the set of all geodesic rays issuing from $x_0$. The advantage of this second model is that it gives an identification of $\partial T$ with a product of finite sets,
\begin{equation*}
\partial T \cong \{1, \ldots, q+1\} \times \{1, \ldots, q\}^\NN,
\end{equation*}
obtained by recording each successive choice for $x_{k+1}$ among the neighbours of $x_k$. The geodesic condition excludes precisely one of $q+1$ choices when $k\geq 1$, namely $x_{k-1}$. This identification endows $\partial T$ with a compact topology and a probability measure $\mu_{x_0}$, namely the product of all corresponding uniform measures on $\{1, \ldots, q+1\}$, respectively $\{1, \ldots, q\}$. 

An equivalent definition of the topology and the measure comes from the observation that the compact (profinite) group $\Fix_G(x_0)$ acts transitively on $\partial T$; in particular the normalized Haar measure of $\Fix_G(x_0)$ induces the unique $\Fix_G(x_0)$-invariant probability measure of $\partial T$.

The drawback of these descriptions is that they depend on the choice $x_0$. It is however apparent that another choice of basepoint, say $y_0$, will define the same topology and the same measure \emph{class}. Moreover, the Radon--Nikod{\'y}m derivative of the measures can readily be computed as
\begin{equation*}
\frac{\dd \mu_{y_0}}{\dd \mu_{x_0}} (\xi) = q^{B_\xi(x_0, y_0)}\kern3mm \forall \, \xi\in\partial T,
\end{equation*}
where $B_\xi$ is the \textbf{Busemann kernel} defined by
\begin{equation}\label{eq:Busemann}
B_\xi(x_0, y_0) = \lim_{k\to\infty} \Big( d(x_0, z_k) - d(y_0, z_k)\Big) \in \ZZ
\end{equation}
with $(z_k)$ any geodesic ray representing $\xi$. In particular, the $G$-action on $(\partial T, \mu_{x_0})$ is non-singular, with Radon--Nikod{\'y}m cocycle
\begin{equation*}
\frac{\dd g \mu_{x_0}}{\dd \mu_{x_0}} (\xi) = q^{B_\xi(x_0, g x_0)}\kern3mm \text{for} \kern3mm  g\in G, \xi\in\partial T.
\end{equation*}

\begin{figure}[hbt]
\centering
\input{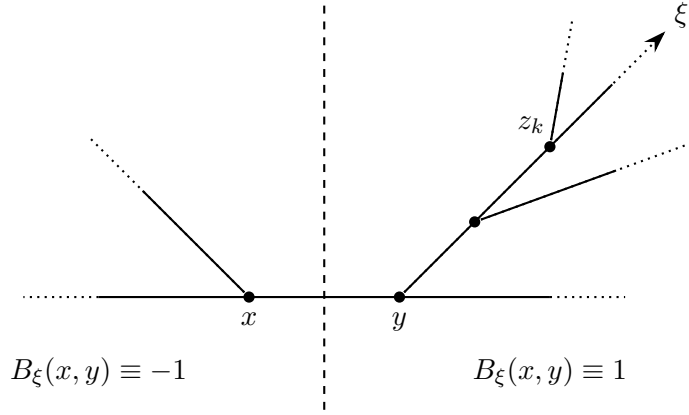}
\caption{The values of the Busemann kernel depending on $\xi$, in the case of adjacent vertices $x,y$. Pictured for $q=2$.}\label{fig:Busemann}
\end{figure}

\subsection{The representations}
Given a Banach space $W$, we make two choices in order to define the Banach space $V=L^p(\partial T, W)$. First, we choose a basepoint $x_0$ and consider the corresponding measure $\mu=\mu_{x_0}$ as above. Secondly, we choose the exponent $1\leq p < \infty$. 

With these choices, the space $V=L^p(\partial T, W)$ of Bochner $p$-summable $W$-valued function classes on $\partial T$ is a Banach space and is separable as soon as $W$ is so. We refer to the monograph~\cite{Diestel-Uhl} for very thorough background information on these Bochner spaces. A basic fact is that the step functions, namely measurable function (classes) taking only finitely many values, form a dense subspace of $V$, see e.g.\ \S II.2 in~\cite{Diestel-Uhl}. The duality theory for $V$ is more subtle and will be discussed in the proof of \Cref{ithm:more-V}.

Given an invertible continuous linear operator $\tau\colon W\to W$ of the Banach space $W$, we define a $G$-representation $\pi$ on $V$ by the ``induction'' formula
\begin{equation}\label{eq:induction}
\big(\pi(g) v\big) (\xi) = \tau^{B_\xi(x_0, g x_0)} \, v(g\inv \xi) \kern3mm \text{for} \kern3mm  g\in G, v\in V.
\end{equation}
For any given $g$, the kernel $B_\xi(x_0, g x_0)$ takes only finitely many values because the triangle inequality bounds its absolute value by $d(x_0, g x_0)$. Therefore, there is a bound on the norm of the operators $\tau^{B_\xi(x_0, g x_0)}$ occurring for the given $g$. It follows that $\pi(g) v$ remains in $L^p(\partial T, W)$ and that $\pi(g)$ defines a continuous linear operator on $V$. A similar argument shows that the representation $\pi$ is continuous in the sense that for every $v\in V$, the orbital map $g\mapsto \pi(g) v$ is norm-continuous as a map $G\to V$.

By construction, the $K$-representation on $V$ for $K=\Fix_G(x_0)$ is simply a translation representation, so that the $K$-fixed vectors $V^K$ are isometrically identified with $W$ as constant maps.

\subsection{Fixed vectors}
So far, we made no assumptions at all on the Banach space $W$ nor on the operator $\tau\colon W\to W$, except that $\tau$ is supposed invertible so that  $\tau^{B_\xi(x_0, g x_0)}$ is defined. In this generality, we can establish the following, where as always $q+1\geq 3$ is the degree of the tree.

\begin{prop}\label{prop:special-or-spherical}
Let $V_0 \se V$ be any non-zero closed $G$-invariant subspace and let $x_0, x_1$ be two adjacent vertices of $T$.

Then $V_0$ admits a non-zero vector fixed by $\Fix_G(\{x_0, x_1\})$.

Furthermore, $V_0$ admits a non-zero vector fixed by $\Fix_G(x_0)$ unless $q$ or $-q$ is an eigenvalue of $\tau$.
\end{prop}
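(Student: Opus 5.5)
The plan is to reduce to vectors fixed by smaller and smaller compact open subgroups, and to finish with an explicit computation on $\Fix_G(\{x_0,x_1\})$-fixed vectors.

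\emph{Smooth vectors.} For a compact open subgroup $U<G$, let $P_U v=\int_U \pi(u)v\,\dd u$ (normalized Haar measure). Since $V_0$ is closed and $G$-invariant, $P_U$ maps $V_0$ into $V_0^U$. For $u\in K=\Fix_G(x_0)$ we have $B_\xi(x_0,ux_0)=0$, so $\pi|_K$ is plain translation of functions on $\partial T$. Let $U_n$ be the pointwise fixator of the ball $B_n(x_0)$. Then $V^{U_n}$ is the space of functions depending only on the vertex $\xi_n$ at distance $n$ from $x_0$ on the ray to $\xi$, and $P_{U_n}$ is conditional expectation onto this $\sigma$-algebra. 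By density of step functions and martingale convergence, $P_{U_n}v\to v$ for every $v\in V$. Hence a nonzero $v\in V_0$ gives some $n$ with $0\neq P_{U_n}v\in V_0$.

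\emph{Descent.} I would use the decreasing chain of compact open subgroups
\[
U_0=K\supseteq U'_0\supseteq U_1\supseteq U'_1\supseteq\cdots,
\qquad U'_m=\Fix_G\big(B_m(x_0)\cup B_m(x_1)\big),
\]
with $U'_0$ replaced by $\Fix_G(\{x_0,x_1\})$, which also contains the edge flip. Let $H$ be the smallest group in the chain with $V_0^H\neq0$; I want to show $H$ is $\Fix_G(\{x_0,x_1\})$ or $K$.

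Suppose instead that $H=U_n$ with $n\ge1$, and take $0\neq v=f(\xi_n)$ in $V_0^{U_n}$. Minimality gives $P_{U'_{n-1}}v=0$, which says that $f$ sums to zero over suitable families of vertices of $S_n(x_0)$, for instance over the $q$ children of each vertex of $S_{n-1}$ on the far side of $x_1$. Now choose $g\in G$ that maps $x_0$ to a neighbour $x_{-1}\ne x_1$ and translates along the line through $x_{-1},x_0,x_1$. Then $\pi(g)v\in V_0$ is fixed by $gU_ng^{-1}$, which contains $U'_{n-1}$-type fixators centred one step away. Averaging $\pi(g)v$ over the next group of the chain, I compute the result explicitly. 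The cocycle factor $\tau^{B_\xi(x_0,gx_0)}$ equals $\tau^{\pm1}$, and its sign is determined by which side of the edge $\xi$ exits (\Cref{fig:Busemann}). Combining this with the zero-sum relations, I expect to exhibit a nonzero component and thus contradict minimality. The case $H=U'_m$ with $m\ge1$ is handled symmetrically, using the flip of the edge $\{x_0,x_1\}$ in place of the translation. This descent is the main obstacle: bookkeeping which cylinders are preserved and checking that the averaged vector cannot vanish identically. I would first attempt it by testing against a single well-chosen cylinder where $f\neq0$.

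\emph{Final step.} Let $0\neq v\in V_0$ be fixed by $\Fix_G(\{x_0,x_1\})$. Its orbits on $\partial T$ are the rays through $x_1$ and the rest, so $v=a\one_{A}+b\one_{A^c}$ with $a,b\in W$, where $A$ has $\mu$-measure $1/(q+1)$. If $P_Kv\neq0$ we are done. Otherwise $a+qb=0$, so $a=-qb$ with $b\neq0$. Let $\sigma$ be the flip of the edge. The Busemann values are $B_\xi(x_0,x_1)=+1$ on $A$ and $-1$ on $A^c$, so $\pi(\sigma)v$ is explicit in terms of $\tau^{\pm1}a$ and $\tau^{\pm1}b$. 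Its $K$-average is a constant function given by a linear combination of the form $c_1\tau b+c_2\tau^{-1}b$. Together with the analogous averages of $\pi(g)v$ for the translations $g$ above, vanishing of all these averages forces an identity $\tau^2b=q^2b$ (up to normalization), that is $(\tau-q)(\tau+q)b=0$ with $b\neq0$. This makes $q$ or $-q$ an eigenvalue of $\tau$. Hence, when $\pm q$ are not eigenvalues, some $P_K\pi(g)v$ is a nonzero $K$-fixed vector in $V_0$.
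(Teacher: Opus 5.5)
\textbf{There is a genuine gap in the descent step.} Your overall architecture matches the paper's: a first compact open subgroup in a nested family with fixed vectors in $V_0$, a contradiction by averaging translates, and an explicit computation at the edge. Your final step is the paper's argument; the flip alone gives $\tau b-q^2\tau\inv b=0$, so the extra appeal to translations there is unnecessary.

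However, the entire content of the first statement lies in the descent, and you leave it as ``I expect to exhibit a nonzero component''. The ingredients you indicate do not carry it out.
\begin{itemize}
\item The containment you state is reversed. Since $B_{n-1}(x_{-1})\cup B_{n-1}(x_0)\subseteq B_n(x_{-1})$, the group $gU_ng\inv=\Fix_G(B_n(x_{-1}))$ is \emph{contained in} the corresponding fixator, not the other way round.
\item A translation with $gx_0=x_{-1}$ does not help. It moves $x_0$ \emph{toward} the bouquets on the non-$x_1$ side, or leaves them off its axis. Consequently, on each $U'_{n-1}$-orbit, $\pi(g)v$ is either $0$ or a $\tau^{\pm1}$-image of $f$ spread over a union of whole bouquets. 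Its average is then one of your zero-sum expressions and vanishes identically, so this choice yields no information.
\end{itemize}
Zero-sum relations alone cannot close the argument.

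\textbf{The missing idea.} Write $C(z)$ for the set of $\xi$ whose ray from $x_0$ passes through $z$. For each child $c$ of a vertex $y$ whose cylinder $C(y)$ becomes a single orbit of the larger group, take a translation $g$ of length one along a geodesic line through $c$, $y$ and $x_0$, with $gc=y$. This $g$ moves $x_0$ \emph{away} from $y$; in your step $U_n\to U'_{n-1}$ it means $gx_0=x_1$, essentially the inverse of your $g$, with the axis through the bouquet.
\begin{itemize}
\item Then $g$ maps the orbit $C(c)$ onto $C(y)$, and $B_\xi(x_0,gx_0)=-1$ for all $\xi\in C(y)$.
\item Hence $\pi(g)v\equiv\tau\inv f(c)$ on $C(y)$.
\item Since $\pi(g)v\in V_0$ is killed by the averaging, invertibility of $\tau$ gives $f(c)=0$.
\item You also need $f=0$ on the unmerged orbits. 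Here $P_{U'_{n-1}}v=0$ gives it directly on the $x_1$ side, which you did not record.
\end{itemize}
Together these give $v=0$.

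This is exactly the paper's argument, which peels off a single bouquet of $q$ leaves at the end of a diameter of a vertex-minimal complete subtree. Your chain of balls and edge-neighbourhoods would work equally well once this step is supplied. Your flip for $U'_m\to U_m$, $m\geq1$, is in fact correct, though you should check it. The flip $\sigma$ swaps the two families of leaves of $B_m(x_0)\cup B_m(x_1)$, with cocycle $\tau\inv$ on the non-$x_1$ side, and $f$ vanishes on the unmerged family.

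Finally, $\Fix_G(\{x_0,x_1\})$ denotes the pointwise fixator and does not contain the flip. If it did, it would not lie in $K$, and your chain would not be nested.
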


We clarify that $\Fix_G(S)$ denotes the \emph{pointwise} fixator of a subset $S\se T$.

In the above statement, the fact that $x_0$ is the vertex that we chose for the measure $\mu=\mu_{x_0}$ is convenient, but not essential. Indeed, any automorphism sending $x_0$ to any other vertex would intertwine the entire picture.

\smallskip
We shall repeatedly use the following fact: given $v\in V$ and a compact subgroup $U<G$, the integral
\begin{equation*}
\int_U \pi(k) v \, \dd k
\end{equation*}
with respect to the normalized Haar measure on $U$ gives a $U$-fixed vector of $V$. The point is that this is a well-defined Bochner integral since the representation is orbitally continuous.

\begin{proof}[Proof of \Cref{prop:special-or-spherical}]
Recall that a subtree $S$ of $T$ is called \textbf{complete} if every vertex of $S$ is either a boundary vertex (i.e.\ adjacent to a vertex not in $S$) or has valency $q+1$ in $S$.

Examples of finite complete subtrees include the trees reduced to one vertex, or to two adjacent vertices, or the regular closed $r$-neighbourhood of any finite subtree, where $r>0$ is an integer and the metric is the combinatorial graph distance.

Since the restriction of $\pi$ to $V_0$ is a continuous representation on a (non-zero) Banach space, it is well-known that there are non-zero vectors fixed by \emph{some} compact open subgroup $U<G$ of $G$. We recall the argument. Choose any non-zero vector $v\in V_0$. Since compact open subgroups form a neighbourhood basis of the identity, we can choose such a group $U$ so that $\|\pi(k) v - v \| < \frac12 \|v\|$ for all $k\in U$. The $U$-fixed vector $\int_U \pi(k) v\,\dd k$ is in $V_0$ and it cannot be zero because the entire closed convex hull of the $U$-orbit of $v$ is in the closed ball of radius $\frac12 \|v\|$ around $v$.

We can assume $U=\Fix_G(S)$ for some finite subtree $S$ of $T$ since these groups still form a neighbourhood basis. We can furthermore assume that $S$ is complete upon replacing it by its closed $1$-neighbourhood.

Among all finite complete subtrees, we choose $S$ with the minimal number of vertices for the property that $\Fix_G(S)$ has non-zero fixed vectors in $V_0$. To prove the first statement of \Cref{prop:special-or-spherical}, it suffices to show that $S$ consist of only one or two vertices. Indeed, since $G$ acts transitively on pairs of adjacent vertices, we can then conjugate $\Fix_G(S)$ so that $S\se \{x_0, x_1\}$, or equivalently $\Fix_G(S) > \Fix_G(\{x_0, x_1\})$.

Let $D=\diam(S)$ be the diameter of $S$, namely the maximal distance $d(x,y)$ as $x,y$ range over all vertices of $S$. By the above discussion, the first statement to prove is $D\leq 1$. We suppose for a contradiction $D\geq 2$ and choose a geodesic segment of length $D$ in $S$. Conjugating $\Fix_G(S)$, we can assume that this segment starts with $x_0, x_1$; we extend it to a geodesic line $(x_k)_{k\in \ZZ}$ in $T$ so that $x_k\in S$ for all $0\leq k \leq D$.

We define a subtree $S'\se S$ of $T$ by removing from $S$ every neighbour of $x_{D-1}$ except $x_{D-2}$, recalling our assumption $D\geq 2$. By construction, $S'$ is again complete. The minimality of $S$ therefore implies that the group $\Fix_G(S')$ has no non-zero fixed vectors in $V$. In other words, we have
\begin{equation}\label{eq:average}
\int_{\Fix_G(S')} \pi(k) v \, \dd k = 0 \kern3mm \forall\, v\in V
\end{equation}
where the integral is with respect to the normalized Haar measure on $\Fix_G(S')$. We apply this to the case where $v$ is a non-zero vector fixed by $\Fix_G(S)$. Since $\Fix_G(S) < \Fix_G(x_0)$, its representation on $V$ is just the regular translation action on $L^p(\partial T, W)$. In other words, $v$ is of the form
\begin{equation*}
v = w_1 \one_{A_1} + \cdots+ w_n \one_{A_n} \kern3mm (w_i \in W)
\end{equation*}
where $\partial T = A_1 \sqcup \ldots \sqcup A_n$ is an enumeration of the $\Fix_G(S)$-orbits in $\partial T$ and at least one of the $w_i$ is non-zero.

The $\Fix_G(S)$-orbits in $\partial T$ have a simple geometric interpretation: each $A_i$ corresponds bi-univocally to a boundary vertex of $S$. Specifically, $A_i$ is the set of boundary points represented by a geodesic ray issuing from that boundary vetex and not containing any other vertex of $S$.

Consider the corresponding description of the $\Fix_G(S')$-orbits. Now $x_{D-1}$ is a boundary vertex of $S'$ but not of $S$. Moreover, $q$ of its neighbours in $S$, including $x_D$, are boundary vertices of $S$ but do not belong to $S'$. The remaining boundary vertices of $S$ and $S'$ coincide.

Therefore we can number the  $\Fix_G(S)$-orbits $A_i$ in such a way that the $\Fix_G(S')$-orbits are precisely
\begin{equation*}
(A_1 \sqcup \ldots \sqcup A_q), A_{q+1}, \ldots, A_n,
\end{equation*}
see \Cref{fig:S}.

\begin{figure}[hbt]
\centering
\begin{tikzpicture}[
    scale=0.5, 
    thick,     
    smalldot/.style = {circle, fill, inner sep=1.5pt},
    hollowdot/.style = {circle, draw, fill=white, inner sep=1.5pt} 
  ]
    
    \draw (0,0) ellipse (10cm and 8cm);
    \draw[dashed] (-1,0) ellipse (9cm and 7.6cm);
    \draw (-11,0) -- (-6,0);
    \draw (4,0) -- (11,0);
    \draw[dotted] (-6,0) -- (4,0);
    

    \node (X0) at (-9,0) [smalldot] {};  
    \node [below=2pt] at (-9,0) {$x_0$};

    \node (X1) at (-7,0) [smalldot] {};  
    \node [below=2pt] at (-6.8,0) {$x_1$};

    \node (XD-2) at (5,0) [smalldot] {};  
    \node [below=2pt] at (4.25,0) {$x_{D-2}$};

    \node (XD-1) at (7,0) [smalldot] {};  
    \node [below=2pt] at (6.5,0) {$x_{D-1}$};

    \node (XD) at (9,0) [hollowdot] {};  
    \node [below=2pt] at (9,0) {$x_D$};


    \node (L) at (-7, 4.5)  [smalldot] {}; 

    \node (RU) at (8.5, 2)  [hollowdot] {}; 

    \node (RD) at (8.5, -2)  [hollowdot] {}; 


    \draw (XD-1) -- (RU);
    \draw (XD-1) -- (RD);


    \foreach \angle in {155, 205}
        \draw (X0) -- ++(\angle:2);
      
    \foreach \angle in {25, -25}
        \draw (XD) -- ++(\angle:2);
  
    \foreach \angle in {110, 130, 150}
        \draw (L) -- ++(\angle:2);
     \draw (L) -- ++(-55:1);
     \draw[dotted] (L) -- ++(-55:2);

    \foreach \angle in {0,20,40}
        \draw (RU) -- ++(\angle:2);

    \foreach \angle in {0,-20,-40}
        \draw (RD) -- ++(\angle:2);

    \foreach \angle in {125, 235}
        \draw (X1) -- ++(\angle:1);
    \foreach \angle in {125, 235}
        \draw[dotted] (X1) -- ++(\angle:2);

    \foreach \angle in {65, -65}
        \draw (XD-2) -- ++(\angle:1);
    \foreach \angle in {65, -65}
        \draw[dotted] (XD-2) -- ++(\angle:2);


    \node at (12,0) {$A_1$};
    \node at (-12,0) {$A_n$};

    \node at (11,3) {$A_2$};
    \node at (11,-3) {$A_q$};

    \node at (-9,6.5) {$A_i$};


    \node at (9,5) {$S$};

    \node at (5,4) {$S'$};

\end{tikzpicture}
\caption{The complete trees $S$ and $S'$, and the boundary orbits $A_1, \ldots, A_n$, with $q=3$. Only one $A_i$ is pictured for $q<i<n$.}\label{fig:S}
\end{figure}
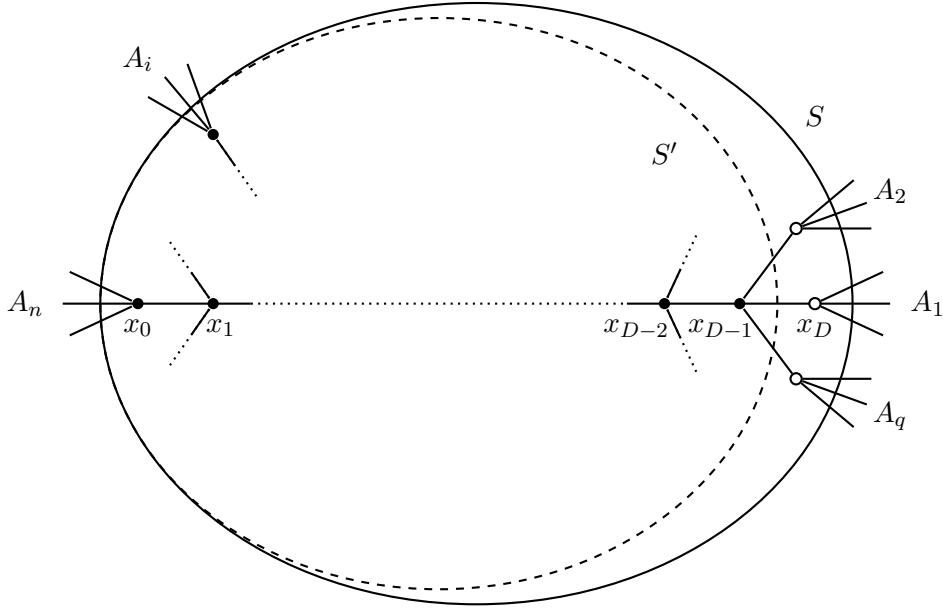

We can now compute the normalized integral~\eqref{eq:average}; 
namely, it is
\begin{equation*}
\frac{w_1 + \cdots + w_q}{q}\, \one_{(A_1 \sqcup \ldots \sqcup A_q)} +w_{q+1} \one_{A_{q+1}}  +\cdots+ w_n \one_{A_n}.
\end{equation*}
In conclusion, the vanishing of that integral amounts to $w_1 + \cdots + w_q=0$ and $w_i=0$ when $i>q$.

We now choose an element $g\in G$ such that $g x_k = x_{k-1}$ for all $k$. Recall that $x_D$ is one of the boundary vertices of $S$ corresponding to one of the first $q$ orbits $A_1, \ldots , A_q$; say it is $A_1$. In particular,
\begin{equation*}
g A_1 = A_1 \sqcup \ldots \sqcup A_q.
\end{equation*}
This time we apply the condition~\eqref{eq:average} to $\pi(g) v$ instead of $v$. Using~\eqref{eq:induction}, this gives for every $\xi\in\partial T$ the condition
\begin{equation}\label{eq:average-xi}
0 = \int_{\Fix_G(S')} (\pi(kg) v)(\xi) \, \dd k = \int_{\Fix_G(S')}  \tau^{B_\xi(x_0, k g x_0)} \,  v(g\inv k\inv\xi) \, \dd k.
\end{equation}
By the choice of $g$, we have $g\inv k\inv\xi\in A_1$ for all $\xi\in A_1  \sqcup \ldots \sqcup A_q$. For these $\xi$, we further have
\begin{equation*}
B_\xi(x_0, k g x_0) = B_{k\inv \xi}(x_0,  g x_0) = B_{k\inv \xi}(x_0,  x_1) =-1
\end{equation*}
by definition of the Busemann cocycle~\eqref{eq:Busemann}. Therefore~\eqref{eq:average-xi} implies $\tau\inv w_1 = 0$ and therefore $w_1=0$.

Finally, we exploit the symmetry among all $q$ vertices of $S\smallsetminus S'$ as follows. We can choose $g$ to translate a geodesic line passing through $x_0$ and through any of those vertices instead of $x_D$; then the exact same argument shows $w_i=0$ for any $1\leq i \leq q$ instead of $i=1$. This completes the proof that $v=0$ and this contradiction shows that $D=\diam(S)\geq 2$ is impossible.

\medskip
We now turn to the second statement of \Cref{prop:special-or-spherical} and write  $K=\Fix_G(x_0)$. Suppose that $V_0$ has no non-zero $K$-fixed vector. Then every $v\in V_0$ satisfies $\int_K \pi(k) v \,\dd k=0$, which can be written as
\begin{equation}\label{eq:average-K}
\int_{\partial T} v(\xi) \,\dd \mu(\xi)=0.
\end{equation}
We know from the first statement that $V_0$ contains a non-zero element $v= w_1 \one_{A_1} + w_2 \one_{A_2}$, where $A_1$ denotes the set of rays starting with $(x_0, x_1)$ whereas $A_2$ denotes those starting with $(x_1, x_0)$. Alternatively, this can be formulated as
\begin{equation}\label{eq:A_i}
\xi\in A_1 \Longleftrightarrow B_\xi(x_0, x_1) = 1,\kern3mm \xi\in A_2 \Longleftrightarrow B_\xi(x_0, x_1) = -1.
\end{equation}
Since $\mu(A_2) = q \mu(A_1)$, condition~\eqref{eq:average-K} reads $w_1 + q w_2 = 0$. In particular, $w_2\neq 0$. Consider now an element $h\in G$ exchanging $x_0$ and $x_1$. Using~\eqref{eq:A_i} in the definition of $\pi$ gives
\begin{equation}\label{eq:h-acts}
\pi(h) v =  \tau(w_2) \,\one_{A_1} + \tau\inv(w_1) \, \one_{A_2}.
\end{equation}
Applying again~\eqref{eq:average-K} gives $\tau(w_2) + q \tau\inv(w_1) = 0$ and we obtain $\tau^2(w_2) = q^2 w_2$. Since $w_2\neq 0$, we conclude that $W$ admits $q$ or $-q$ as eigenvalue.
\end{proof}

\begin{rem}\label{rem:non-complete}
There is only one point in the above proof where we used that $V_0$ is closed in a Banach space, namely to argue by integration that $V_0$ contains a non-zero vector fixed by some compact open subgroup $U<G$. All other integrals in the proof where of the following form: given two compact open subgroups $U,U'<G$, we integrate $\int_{U'} \pi(k) v \,\dd k$ where $v$ is $U$-fixed. Thus this integral is just a finite linear combination indexed by the finite set $U'/(U'\cap U)$, and this makes sense in any vector subspace of $V$.

In conclusion, the entire statement of \Cref{prop:special-or-spherical} holds true as soon as $V_0\se V$ is a $G$-invariant vector subspace containing a non-zero smooth vector.
\end{rem}

\subsection{Invariant subspaces}
We now specialize the above construction to invertible operators $\tau$ obtained as follows. Choose a holomorphic branch $\sqrt{\cdot}$ of the square root on the (unusual) cut domain $\CC \smallsetminus \RR_{\geq 0}$ and consider 
\begin{equation*}
\fhi(z) = \frac{z + \sqrt{z^2 - 4 q}}2 \kern6mm \text{(where defined)}.
\end{equation*}
In particular, $\fhi$ is defined and holomorphic on the open disc $\DD_{2q^{1/2}}$ of radius $2q^{1/2}$ around zero; see \Cref{fig:domain}. Here we write $q^{1/2}$ for the (usual) real square root.

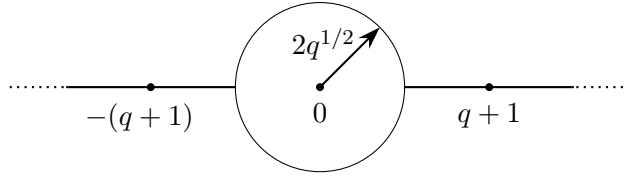
\begin{figure}[htb]
\centering
\begin{tikzpicture}[
    scale=0.75, 
    thick 
  ]
    
    \def\R{1.5} 
    
    \draw[thin] (0,0) circle (\R);
    
    \fill (0,0) circle (2pt);
    \node [below=2pt] at (0,0) {$0$};
    
    \draw [ -{Stealth[length=3mm, width=2mm]} ] (0,0) -- (45:\R) 
              node[midway, xshift=-10pt, yshift=5pt] {$2q^{1/2}$};

    \draw (- \R, 0) -- (-\R - 3, 0);
    \draw[dotted] (- \R - 4, 0) -- (-\R - 3, 0);

    \fill (\R + 1.5, 0) circle (2pt);
    \node [below=2pt] at (\R +1.5,0) {$q+1$};
   
    \fill (-\R - 1.5, 0) circle (2pt);
    \node [below=2pt] at (-\R -1.7,0) {$-(q+1)$};
   
    \draw (\R, 0) -- (\R + 3, 0);
    \draw[dotted] (\R + 4, 0) -- (\R + 3, 0);


\end{tikzpicture}
\caption{The domain of $\fhi$ has two horizontal cuts; it contains $\DD_{2q^{1/2}}$ but not $\pm (q+1)$.}\label{fig:domain}
\end{figure}

Given any continuous linear operator $\alpha\colon W\to W$ of norm $\|\alpha\|< 2q^{1/2}$, we therefore obtain by holomorphic functional calculus an operator $\tau=\fhi(\alpha)$. Moreover $\tau$ is invertible; explicitly, we have $\tau\inv =\fhi(\alpha)\inv$, where $\fhi(z)\inv= \frac1{2q} (z - \sqrt{z^2 - 4 q})$. By construction, we have
\begin{equation*}
\tau + q \tau\inv = \alpha
\end{equation*}
since $\fhi(z) + q \fhi(z)\inv=z$.

\medskip
The main motivation for the construction of the $G$-representation $\pi$ on $V=L^p(\partial T, W)$ is that we can control $G$-invariant subspaces of $V$ in terms of $\alpha$-invariant subspaces of $W$, as follows.

\begin{prop}\label{prop:induct-dico}
Let $V_0 \se V$ be any non-zero closed $G$-invariant subspace and let $K=\Fix_G(x_0)$.

Then the image $W_0\se W$ of $V_0^K$ under the identification $V^K \cong W$ is a non-zero $\alpha$-invariant closed subspace.
\end{prop}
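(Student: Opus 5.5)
Three things need to be shown: $W_0$ is closed, it is non-zero, and it is $\alpha$-invariant. Closedness is immediate. The identification $V^K\cong W$ by constant maps is isometric, and $V_0^K=V_0\cap V^K$ is closed. Hence $W_0$ is a closed subspace of $W$.

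For non-vanishing, I would use the second statement of \Cref{prop:special-or-spherical}. That statement gives a non-zero $K$-fixed vector in $V_0$ unless $q$ or $-q$ is an eigenvalue of $\tau$. So it suffices to check that $\pm q\notin\sigma(\tau)$. By the spectral mapping theorem, $\sigma(\tau)=\fhi(\sigma(\alpha))$, and $\sigma(\alpha)\se\DD_{2q^{1/2}}$. Suppose $\fhi(z)=\pm q$. Since $\fhi(z)+q\fhi(z)\inv=z$, this forces $z=\pm(q+1)$. But $|q+1|>2q^{1/2}$ because $(q^{1/2}-1)^2>0$, so such $z$ lies outside the disc. (Alternatively, $\pm(q+1)$ is simply not in the domain of $\fhi$, as the figure indicates.) Thus $\pm q$ is not even in the spectrum of $\tau$, and in particular is not an eigenvalue, so $W_0\neq0$.

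For $\alpha$-invariance, the natural tool is the Hecke operator of averaging over neighbours. Take $w\in W_0$, with constant function $v=w\one_{\partial T}\in V_0^K$. Choose $h\in G$ with $hx_0=x_1$ and form
\[
u=\int_K \pi(k)\pi(h)v\,\dd k .
\]
This vector lies in $V_0$, since $V_0$ is closed and $G$-invariant, and it is $K$-fixed, so it is a constant $c\in W$. I would compute $c$ by integrating over $\partial T$, using that $\int_K\pi(k)v'\,\dd k$ equals the constant $\int_{\partial T}v'\,\dd\mu$, as in~\eqref{eq:average-K}. For the constant function $v$ we get $(\pi(h)v)(\xi)=\tau^{B_\xi(x_0,x_1)}w$. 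By~\eqref{eq:A_i} this is $\tau w$ on $A_1$ and $\tau\inv w$ on $A_2$, with $\mu(A_1)=\frac1{q+1}$ and $\mu(A_2)=\frac q{q+1}$. Hence
\[
c=\frac1{q+1}\bigl(\tau w+q\tau\inv w\bigr)=\frac1{q+1}\,\alpha w .
\]
Therefore $\alpha w\in W_0$, which proves $\alpha$-invariance.

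The main point requiring care is the Busemann sign convention in~\eqref{eq:A_i} together with the measure weights: the proof needs the combination $\tau+q\tau\inv$ and not its reverse $q\tau+\tau\inv$. The sign convention is the one already used in~\eqref{eq:h-acts}, and the weights $\mu(A_2)=q\mu(A_1)$ are those used in the proof of \Cref{prop:special-or-spherical}, so this combination comes out correctly. The remaining steps are routine.
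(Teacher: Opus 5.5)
Your proof is correct and follows essentially the same route as the paper. Non-vanishing comes from the second part of \Cref{prop:special-or-spherical} plus the spectral mapping theorem. The $\alpha$-invariance comes from averaging $\pi(h)(w\one_{\partial T})$ over $K$, which gives $\frac{1}{q+1}(\tau+q\tau^{-1})w=\frac{1}{q+1}\alpha w$. Your extra details are all valid and only make explicit what the paper leaves implicit: the closedness of $W_0$, the check via $\varphi(z)+q\varphi(z)^{-1}=z$ that $\varphi(z)=\pm q$ forces $z=\pm(q+1)$, and using any $h$ with $hx_0=x_1$ instead of an inversion (enough since $v$ is constant).
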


\begin{proof}
 The spectral mapping theorem shows that $\pm q$ is not an eigenvalue of $\tau$, because $\pm q$ lie outside $\fhi(\DD_{2q^{1/2}})$ since $\pm(q+1)$ are not in $\DD_{2q^{1/2}}$. Therefore \Cref{prop:special-or-spherical} shows that $W_0$ is non-zero. We need to show $\alpha(w)\in W_0$ for all $w\in W_0$.

Consider $v=w \one_{\partial T}$ in $V_0^K$ and an element $h\in G$ exchanging $x_0$ and $x_1$ as in the end of the Proof of \Cref{prop:special-or-spherical}. The computation for~\eqref{eq:h-acts} gives
\begin{equation*}
\pi(h) v =  \tau(w) \,\one_{A_1} + \tau\inv(w) \, \one_{A_2}.
\end{equation*}
On the other hand, $\int_K \pi(k) \pi(h) v \,\dd k$ is again in $V_0^K$, so that it is of the form $w' \one_{\partial T}$ for some $w'\in W_0$. Computing this integral as $\int_{\partial T}  (\pi(h) v)(\xi) \,\dd \mu(\xi)$ and using $\mu(A_1) = 1/(q+1)$, $\mu(A_2)=q/(q+1)$, we deduce
\begin{equation*}
w' =\frac1{q+1} \tau(w) + \frac{q}{q+1} \tau\inv(w) = \frac1{q+1} \alpha (w).
\end{equation*}
This confirms $\alpha(w)\in W_0$.
\end{proof}

We have now collected enough properties of $V$ to deduce that it is irreducible if $\alpha$ has no invariant subspace.

\begin{thm}\label{thm:irred}
Let $W$ be a Banach space admitting a linear operator $\alpha$ without invariant closed proper subspace and of norm  $\|\alpha\|< 2q^{1/2}$.

Then the $G$-representation on $V=L^p(\partial T, W)$ constructed above is topologically irreducible for every $1\leq p < \infty$.
\end{thm}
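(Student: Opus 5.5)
Let $V_0\se V$ be a non-zero closed $G$-invariant subspace; the goal is $V_0=V$. By \Cref{prop:induct-dico}, the image $W_0\se W$ of $V_0^K$ is a non-zero closed $\alpha$-invariant subspace. Since $\alpha$ has no invariant closed proper subspace, $W_0=W$. Thus every constant function $w\one_{\partial T}$, with $w\in W$, lies in $V_0$.

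It then remains to show that the $G$-translates of constant functions span a dense subspace of $V$. Since step functions are dense in $L^p(\partial T,W)$, and since the sets of rays through a given finite path from $x_0$ (cylinder sets) generate the topology, it suffices to show that $w\one_{C}\in V_0$ for every cylinder set $C$ and every $w\in W$. Indeed, every step function on a clopen partition is a finite combination of such vectors, and step functions over clopen sets are still dense in $V$.

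I would prove this by induction on the depth of the cylinder, using the translation $g$ from the proof of \Cref{prop:special-or-spherical}. The computation \eqref{eq:h-acts} gives
\[
\pi(h)(w\one_{\partial T})=\tau(w)\one_{A_1}+\tau\inv(w)\one_{A_2}\in V_0 .
\]
Combining this with $w'\one_{\partial T}=w'\one_{A_1}+w'\one_{A_2}$, and choosing $w'=\tau\inv(w)$ (possible because $\tau$ is invertible, so $w$ ranges over all of $W$), we obtain $(\tau(w)-\tau\inv(w))\one_{A_1}\in V_0$. The operator $\tau-\tau\inv$ need not be invertible. A cleaner route is to use the $K$-action: the $K$-translates of $A_1$ are the $q+1$ depth-one cylinders $A_1^{(j)}$, whose union is $\partial T$. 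Averaging over subgroups, the closed subspace $V_0^{\Fix(\{x_0,x_1\})}$ contains the vectors $a\one_{A_1}+b\one_{A_2}$ for $(a,b)$ ranging over a subspace of $W\oplus W$. This subspace contains the diagonal $(w,w)$ and the pairs $(\tau w,\tau\inv w)$. If it is proper, I expect to reproduce the eigenvalue argument of \Cref{prop:special-or-spherical} and derive a contradiction. Alternatively, apply \Cref{prop:special-or-spherical} to the closure of the span of the translates. I would then propagate to deeper cylinders by conjugating with translations $g$ along geodesics through $x_0$, which map cylinders to cylinders up to the finitely many factors $\tau^{\pm k}$.

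The main obstacle I anticipate is this density step: showing that the vectors $w\one_C$ (rather than only combinations twisted by $\tau$) lie in $V_0$. A possibly simpler alternative, which I would try first, is duality. Suppose $V_0\neq V$ and take a non-zero functional $\lambda\in V^*$ vanishing on $V_0$. The annihilator $V_0^\perp\subseteq V^*$ is a weak-$*$ closed invariant subspace for the contragredient representation. The contragredient is again an induced representation, built from $\tau^*$ and the dual $L^{p'}$-type space of $W^*$-valued measures. The operator $\alpha^*$ has no invariant weak-$*$ closed proper subspace, and its norm is again less than $2q^{1/2}$. Running \Cref{prop:special-or-spherical} and \Cref{prop:induct-dico} for the dual (the averaging arguments only need finite combinations, as noted in \Cref{rem:non-complete}) yields a non-zero $K$-invariant functional in $V_0^\perp$. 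This functional corresponds to a non-zero element of $W^*$. But it must vanish on all constants $w\one_{\partial T}$, which are all in $V_0$, a contradiction. The technical point in this route is justifying the dual setup, in particular the duality for Bochner spaces, for general $p$ and $W$.
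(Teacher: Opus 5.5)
Your first route is the paper's route, and you stop one line short of the end. You correctly obtain $(\tau(w)-\tau\inv(w))\one_{A_1}\in V_0$ for all $w\in W$; this is the paper's identity \eqref{eq:half-half}. You then assert that $\tau-\tau\inv$ need not be invertible. Under the hypotheses it \emph{is} invertible, and this is the missing idea. By the functional calculus, $\tau-\tau\inv=(\fhi-\fhi\inv)(\alpha)$. The equation $\fhi(z)-\fhi(z)\inv=0$ forces $\fhi(z)=\pm1$, hence $z=\fhi(z)+q\fhi(z)\inv=\pm(q+1)$. This point lies outside $\DD_{2q^{1/2}}$ because $q+1>2q^{1/2}$. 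Since $\sigma(\alpha)\se\DD_{2q^{1/2}}$ (as $\|\alpha\|<2q^{1/2}$), the spectral mapping theorem gives $0\notin\sigma(\tau-\tau\inv)$, so every $w\one_{A_1}$ lies in $V_0$. The rest needs no induction on depth. Let $(x,y)$ be an oriented edge with $d(x_0,x)<d(x_0,y)$, and let $g$ satisfy $g(x_0,x_1)=(x,y)$. Then $\pi(g)(w\one_{A_1})=\tau^{d(x_0,x)}(w)\,\one_{C}$ with $C=\{\xi: B_\xi(x,y)=1\}$. The remaining half-trees are complements of these, constants are in $V_0$, and half-trees form a basis of the topology.

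Your fallback routes do not avoid this point. They rest on the same spectral fact, and as sketched they target the wrong eigenvalue. In the $\Fix_G(\{x_0,x_1\})$ route, suppose $Q=\{c\in W : c\one_{A_1}\in V_0\}$ were a proper closed subspace. A non-zero $\omega\in W^*$ vanishing on $Q\supseteq(\tau-\tau\inv)W$ would satisfy $(\tau^*)^2\omega=\omega$. So what must be excluded is an eigenvalue $\pm1$ of $\tau^*$, which the $\pm q$ argument of \Cref{prop:special-or-spherical} does not touch. In the duality route, the contragredient is not induced from $\tau^*$. Because of the Radon--Nikod{\'y}m factor $q^{B_\xi(x_0,gx_0)}$, on $W^*$-valued step functions it is given by \eqref{eq:induction} with $\tau$ replaced by $q(\tau^*)\inv$. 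The dual version of \Cref{prop:special-or-spherical} therefore requires that $\pm1$ not be an eigenvalue of $\tau^*$. That is again the invertibility of $\tau^2-1=\tau(\tau-\tau\inv)$ that you doubted. Equivalently, $q(\tau^*)\inv=\tilde\fhi(\alpha^*)$, where $\tilde\fhi$ uses the other branch of the square root, and the spectral argument of \Cref{prop:induct-dico} applies to it. That route can be completed. The $\Fix_G(S)$-fixed functionals are automatically $W^*$-valued step functions on the $\Fix_G(S)$-orbits, so no Radon--Nikod{\'y}m property is needed, and weak-$*$ averaging gives a non-zero smooth vector in $V_0^\perp$. As written, however, it is an unverified sketch with the wrong inducing operator, and much longer than the single spectral observation that completes your first route.
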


\begin{proof}[Proof of \Cref{thm:irred}]
Let $V_0 \se V$ be a non-zero closed $G$-invariant subspace. By density of the step functions, it suffices to prove that $V_0$ contains all vectors of the form $w\one_C$ where $w\in W$ and $C\se \partial T$ ranges over a basis of the topology of $\partial T$. Such a basis is provided by the collection of all ``half-trees''
\begin{equation*}
C = \Big\{ \xi\in \partial T : B_\xi(x, y) = 1 \Big\} \kern3mm \text{where}\kern3mm x,y\in T \text{ with } d(x,y)=1.
\end{equation*}
Note that the other ``half'' $C' = \partial T \smallsetminus C$ corresponds to the only other value $ B_\xi(x, y) = -1$; see \Cref{fig:Busemann}. By $G$-invariance of $V_0$, it suffices to establish this in the special case where $x=x_0$.

Choose $g\in G$ such that $g x_0 = y$. Given $w'\in W$, the definition~\eqref{eq:induction} gives
\begin{equation*}
\pi(g) \left(w'\, \one_{\partial T}\right) = \tau(w')\, \one_C + \tau\inv(w') \,\one_{C'}
\end{equation*}
and therefore
\begin{equation}\label{eq:half-half}
\pi(g) \left(w'\, \one_{\partial T} \right) - \tau\inv(w')\, \one_{\partial T} = \big(\tau(w') - \tau\inv(w') \big)\,\one_C.
\end{equation}
\Cref{prop:induct-dico} implies that $V_0$ contains all elements of the form  $w''\, \one_{\partial T}$ with $w''\in W$. In particular, $V_0$ contains both $w'\, \one_{\partial T}$ and $\tau\inv(w')\, \one_{\partial T}$. By $G$-invariance it also contains $\pi(g) \left(w'\, \one_{\partial T} \right)$ and thus we conclude from~\eqref{eq:half-half} that $V_0$ contains, for any $w'\in W$, the element $\big(\tau(w') - \tau\inv(w') \big)\,\one_C$.

Therefore, it only remains to show that any $w\in W$ is of the form $\tau(w') - \tau\inv(w')$; that is, to show that $\tau - \tau\inv$ is onto. Actually that map is even invertible: this follows from the spectral mapping theorem because the function $\fhi-\fhi\inv$ does not vanish on $\DD_{2q^{1/2}}$. In fact it never vanishes because its domain does not contain $\pm (q+1)$.
\end{proof}

At this point, \Cref{ithm:inad} follows from the negative solution to the invariant subspace problem.

\begin{proof}[Proof of \Cref{ithm:inad}]
Enflo~\cite{Enflo76,Enflo87} and Read~\cite{Read84} have proved that there exists an infinite-dimensional Banach space $W$ and a continuous linear operator $\alpha\colon W\to W$ without any invariant closed proper subspace. If needed, we rescale $\alpha$ to ensure $\|\alpha\|< 2q^{1/2}$.

Then the $G$-representation $\pi$ on $V=L^p(\partial T, W)$ is topologically irreducible by \Cref{thm:irred} and inadmissible by \Cref{prop:induct-dico}.

\smallskip
For the additional statement, consider the subspace $\smooth{V}$ of smooth vectors and suppose that it contains a non-zero algebraically irreducible subrepresentation $V_1 \se \smooth{V}$. In view of \Cref{rem:non-complete}, we can still apply \Cref{prop:special-or-spherical} and deduce $V_1^K\neq 0$. On the other hand, Olshanskii proved (Theorem~1 in~\cite{Olshanskii75}) that algebraically irreducible $G$-representations are admissible, so that $V_1^K$ is finite-dimensional. (In the present case, this can also be deduced from the fact that $(G,K)$ is a Gelfand pair, using that $V_1$ must have countable algebraic dimension. In fact this argument amounts to a special case of Olshanskii's Lemma~3 in~\cite{Olshanskii75}.) Now the argument of \Cref{prop:induct-dico} shows that $\alpha$ preserves a finite-dimensional subspace of $W$, which is absurd.
\end{proof}

We can obtain more Banach-theoretic information on $V$ from information on $W$ provided by the extensive work of Read.

\begin{proof}[Proof of \Cref{ithm:more-V}]
The quasi-reflexive Banach space $W$ discovered by James~\cite{James51} is isometrically isomorphic to its bidual $W^{**}$. Read proves in~\cite{Read89} that this space admits continuous linear operator without invariant closed proper subspace.  We apply \Cref{ithm:inad} to this situation, with the additional restriction $p>1$. We only need to justify that $V=L^p(\partial T, W)$ is also isometrically isomorphic to its own bidual (but we point out that it is not quasi-reflexive).

Write $p'$ for the H{\"o}lder conjugate $p'=p/(p-1)$. Integration over $\partial T$ yields a canonical isometric embedding of $L^{p'}(\partial T, W^*)$ into $V^*$, where~$^*$ denotes the Banach dual. This embedding is an isometric isomorphism if and only if $W^*$ has the Radon--Nikod{\'y}m property, see Theorem~1 of \S IV.1 in~\cite{Diestel-Uhl}. On the other hand, every separable dual enjoys the Radon--Nikod{\'y}m property, see Theorem~1 of \S III.3 in~\cite{Diestel-Uhl}. The dual $W^*$ is separable; this holds for any space whose dual is separable: see Banach, Th{\'e}or{\`e}me~12 of \S XI.9 in~\cite{Banach_book}. Here, $W^{**}$ is separable because $W$ is separable from its very definition. 

In summary, $V^*$ can be identified isometrically with $L^{p'}(\partial T, W^*)$. Since $p'<\infty$ and $p''=p$, we can repeat exactly the same Radon--Nikod{\'y}m arguments to identify $V^{**}$ with $L^{p}(\partial T, W^{**})$. By James's theorem, this implies that $V^{**}$ is isometrically isomorphic to $V$.

\smallskip
Concerning $V=L^1$, we obtain it from another of Read's results. He proved in~\cite{Read85} that $W=\ell^1(\NN)$ also admits a continuous linear operator without invariant closed proper subspace. This time, we apply \Cref{ithm:inad} with $p=1$; Fubini's theorem implies that $V=L^1(\partial T, \ell^1(\NN))$ can be canonically identified with $L^1(\partial T \times \NN)$. Since $\partial T \times \NN$ is a countably generated non-atomic measure space, we obtain indeed the standard Lebesgue space $L^1$. 
\end{proof}

Of course with $p=2$ we obtain a Hilbert space $V=L^2(\partial T, W)$ if $W$ itself is a Hilbert space; the original invariant subspace problem concerns that case, but remains notoriously unsolved.


\bibliographystyle{./amsalpha-nobysame}
\bibliography{../BIB/ma_bib}

\end{document}